\theoremstyle{theorem}
\newtheorem{theorem}{Theorem}
\newtheorem{thm}[theorem]{Theorem}
\newtheorem{lem}[theorem]{Lemma}
\newtheorem{prop}[theorem]{Proposition}
\newtheorem{cor}[theorem]{Corollary}
\theoremstyle{definition}
\begin{document}

\title{How many units can a commutative ring have?}
\markright{How many units can a commutative ring have?}
\author{Sunil K. Chebolu and Keir Lockridge}

\maketitle

\final

\begin{abstract}
L\'{a}szl\'{o} Fuchs posed the following problem in 1960, which remains open: classify the abelian groups occurring as the group of all units in a commutative ring. In this note, we provide an elementary solution to a simpler, related problem: find all cardinal numbers occurring as the cardinality of the group of all units in a commutative ring. As a by-product, we obtain a solution to Fuchs' problem for the class of finite abelian $p$-groups when $p$ is an odd prime. 
\end{abstract}

\section{Introduction}\label{sec:introduction}

It is well known that a positive integer $k$ is the order of the multiplicative group of a finite field if and only if $k$ is one less than a prime power.  The corresponding fact for finite commutative rings, however, is not as well known. Our motivation for studying this---and the question raised in the title---stems from a problem posed by L\'{a}szl\'{o} Fuchs in 1960: characterize the abelian groups that are the group of units in a commutative ring (see \cite{Fuchs}). Though Fuchs' problem remains open, it has been solved for various specialized classes of groups, where the ring is not assumed to be commutative. Examples include cyclic groups (\cite{PS}), alternating, symmetric and finite simple groups (\cite{CT-1, CT-2}), indecomposable abelian groups (\cite{cl-2}), and dihedral groups (\cite{cl-3}). In this note we consider a much weaker version of Fuchs' problem, determining only the possible cardinal numbers $|R^\times|$, where $R$ is a commutative ring with group of units $R^\times$.

In a previous note in the \textsc{Monthly} (\cite{ditor}), Ditor showed that a finite group $G$ of odd order is the group of units of a ring if and only if $G$ is isomorphic to a direct product of cyclic groups $G_i$, where $|G_i| = 2^{n_i} - 1$ for some positive integer $n_i$. This implies that an odd positive integer is the number of units in a ring if and only if it is of the form $\prod_{i=1}^t (2^{n_i} -1) $ for some positive integers $n_1, \dots, n_t$. As Ditor mentioned in his paper, this theorem may be derived from the work of  Eldridge in \cite{Eldridge} in conjunction with the Feit-Thompson theorem which says that every finite group  of odd order is solvable. However, the purpose of his note was to give an elementary proof of this result using  classical structure theory. Specifically, Ditor's proof uses the following key ingredients: Mashke's theorem, which classifies (for finite groups) the group algebras over a field that are semisimple rings; the Artin-Wedderburn theorem, which describes the structure of semisimple rings; and Wedderburn's little theorem, which states that every finite division ring is a field.

In this note we give another elementary proof of Ditor's theorem for commutative rings. We also extend the theorem to even numbers and infinite cardinals,  providing a complete answer to the  question posed in the title; see Theorem \ref{maintheorem}. Our approach also gives an elementary solution to Fuchs' problem for finite abelian $p$-groups when $p$ is an odd prime; see Corollary \ref{p-groups-odd}.

\section{Finite cardinals}

We begin with two lemmas. For a prime $p$, let $\mathbf{F}_p$ denote the field of $p$ elements. Recall that any ring homomorphism $\phi \colon A \longrightarrow B$ maps units to units, so $\phi$ induces a group homomorphism $\phi^\times \colon  A^\times \longrightarrow B^\times$. 

\begin{lem} \label{units-quotient}
Let $\phi \colon A \longrightarrow B$  be a homomorphism of commutative rings. If the induced group homomorphism $\phi^\times \colon  A^\times \longrightarrow B^\times$ is surjective, then there is a quotient $A'$ of $A$ such that $(A')^\times \cong B^\times$.
\end{lem}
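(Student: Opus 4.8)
The plan is to take $A'$ to be the image of $\phi$ in $B$. First I would invoke the first isomorphism theorem for rings: since $\phi(A) \cong A/\ker\phi$, the subring $\phi(A) \subseteq B$ really is a quotient of $A$. Here I use that ring homomorphisms preserve the identity, so that $1_B = \phi(1_A) \in \phi(A)$ and $\phi(A)$ is a unital subring of $B$ sharing the identity of $B$. It then remains to verify that $(\phi(A))^\times = B^\times$ as subgroups of $B^\times$; note this gives an honest equality, not merely an isomorphism.

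For the inclusion $(\phi(A))^\times \subseteq B^\times$, I would observe that if $u \in \phi(A)$ has an inverse $w$ in $\phi(A)$, then $uw = 1_{\phi(A)} = 1_B$ holds in $B$ as well, so $u$ is already a unit of $B$. This step relies on $\phi(A)$ and $B$ having the same identity element; this interplay between being a unit of a subring and a unit of the ring is the only subtle point in the argument, and the shared-identity observation is exactly what resolves it.

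For the reverse inclusion $B^\times \subseteq (\phi(A))^\times$, I would use the surjectivity hypothesis: given $v \in B^\times$, choose $a \in A^\times$ with $\phi^\times(a) = v$. Then $\phi(a)$ and $\phi(a^{-1})$ both lie in $\phi(A)$ and multiply to $\phi(1_A) = 1_B$, so $v = \phi(a) \in (\phi(A))^\times$. Combining the two inclusions yields $(\phi(A))^\times = B^\times$, so $A' = \phi(A)$ (equivalently, $A' = A/\ker\phi$) has the required property. The argument is short, and I expect no real obstacle beyond correctly tracking the identity element through the subring inclusion.
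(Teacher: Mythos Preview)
Your proposal is correct and follows essentially the same route as the paper: both take $A' = \mathrm{Im}\,\phi$, invoke the first isomorphism theorem, and then verify the equality $(\mathrm{Im}\,\phi)^\times = B^\times$ by checking the two inclusions. Your version is slightly more explicit about the shared identity element, but the argument is the same.
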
 

\begin{proof}
By the first isomorphism theorem for rings, $\mathrm{Im}\, \phi$ is isomorphic to a quotient of $A$. It therefore suffices to prove that $(\mathrm{Im}\, \phi)^\times = B^\times$. Since ring homomorphisms map units to units and $\phi^\times$ is surjective, we have $B^\times \subseteq (\mathrm{Im}\, \phi)^\times$. The reverse inclusion $(\mathrm{Im}\, \phi)^\times \subseteq B^\times$ holds since every unit in the subring $\mathrm{Im}\, \phi$ must also be a unit in the ambient ring $B$.
\end{proof}

\begin{lem}\label{ff-tensor}
Let $V$ and $W$ be finite fields of characteristic $2$.
\begin{enumerate}
\item The tensor product $V \otimes_{\mathbf{F}_2} W$ is isomorphic as a ring to a finite direct product of finite fields of characteristic $2$. \label{fdpff}
\item As $\mathbf{F}_2$-vector spaces, $\dim(V \otimes_{\mathbf{F}_2} W) = (\dim V)(\dim W)$. \label{dims}
\end{enumerate}
\end{lem}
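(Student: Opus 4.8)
The plan is to reduce everything to the structure of $V$ as a simple extension of $\mathbf{F}_2$. Because the multiplicative group of a finite field is cyclic, $V = \mathbf{F}_2(\alpha)$ for some $\alpha$, so $V \cong \mathbf{F}_2[x]/(f)$ where $f \in \mathbf{F}_2[x]$ is the minimal polynomial of $\alpha$; it is irreducible of degree $\dim_{\mathbf{F}_2} V$. Base change along $\mathbf{F}_2 \to W$ then gives ring isomorphisms
$$V \otimes_{\mathbf{F}_2} W \;\cong\; \bigl(\mathbf{F}_2[x]/(f)\bigr) \otimes_{\mathbf{F}_2} W \;\cong\; W[x]/(f).$$
Since $W[x]/(f)$ is a free $W$-module of rank $\deg f$, this already yields part (\ref{dims}): $\dim_{\mathbf{F}_2}\bigl(V\otimes_{\mathbf{F}_2} W\bigr) = (\deg f)(\dim_{\mathbf{F}_2} W) = (\dim V)(\dim W)$. (Alternatively, part (\ref{dims}) is the standard fact that a product of bases of $V$ and $W$ gives a basis of the tensor product.) So part (\ref{fdpff}) is the only real content, and it amounts to understanding the factorization of $f$ over $W$.

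For that, I would first show $f$ is separable, equivalently $\gcd(f, f') = 1$ in $\mathbf{F}_2[x]$. One checks $f' \neq 0$: otherwise $f$ lies in $\mathbf{F}_2[x^2]$, and since the Frobenius $a \mapsto a^2$ is a ring endomorphism fixing $\mathbf{F}_2$, every element of $\mathbf{F}_2[x^2]$ is a square in $\mathbf{F}_2[x]$ --- impossible for an irreducible polynomial. Since $\deg f' < \deg f$ and $f$ is irreducible, $\gcd(f, f') = 1$. (One could instead just quote that finite fields are perfect, so irreducible polynomials over them are separable.) The Euclidean algorithm is insensitive to field extension, so $\gcd(f, f') = 1$ still holds over $W$; hence $f = g_1 \cdots g_r$ with the $g_i$ distinct monic irreducibles in $W[x]$. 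By the Chinese Remainder Theorem,
$$W[x]/(f) \;\cong\; \prod_{i=1}^{r} W[x]/(g_i),$$
and each $W[x]/(g_i)$ is a field (as $g_i$ is irreducible over the field $W$), necessarily finite --- being a finite-dimensional space over the finite field $W$ --- and of characteristic $2$, since it contains $W$. This proves part (\ref{fdpff}).

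The only step with any subtlety is the separability of $f$, i.e. ruling out repeated irreducible factors of $f$ over $W$; the rest is bookkeeping with base change, freeness, and the Chinese Remainder Theorem. A more structural alternative for part (\ref{fdpff}) would be to note that $V \otimes_{\mathbf{F}_2} W$ is a finite commutative ring, hence a product of local rings, and then to verify that it has no nonzero nilpotents (again using separability of $f$); but reducing to the factorization of $f$ over $W$ seems the most hands-on route.
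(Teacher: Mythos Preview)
Your proof is correct and follows essentially the same route as the paper: write one of the fields as $\mathbf{F}_2[x]/(f)$ via a primitive element, base-change to get $W[x]/(f)$, invoke separability to ensure the irreducible factors over $W$ are distinct, and apply the Chinese remainder theorem. The only difference is cosmetic---the paper simply cites that finite extensions of $\mathbf{F}_2$ are separable, whereas you spell out the $f' \neq 0$ argument in detail.
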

\begin{proof}
To prove (\ref{fdpff}), let $K$ and $L$ be finite fields of characteristic 2.  By the primitive element theorem,  we have $L \cong \mathbf{F}_2[x]/(f(x))$, where $f(x)$ is an irreducible polynomial  in $\mathbf{F}_2[x]$. This implies that
\[K \otimes_{\mathbf{F}_2} L \cong \frac{K[x]}{(f(x))}.\]
The irreducible factors of $f(x)$ in $K[x]$ are distinct since the extension $L/\mathbf{F}_2$ is separable. Now let $f(x) = \prod_{i=1}^t f_i(x)$ be the factorization of $f(x)$ into its distinct irreducible factors in $K[x]$. We then have the following series of ring isomorphisms:
\[K \otimes_{\mathbf{F}_2} L \cong  \frac{K[x]}{(f(x))} \cong \frac{K[x]}{(\prod_{i=1}^t f_i(x))} \cong \prod_{i=1}^t \frac{K[x]}{(f_i(x))}, \]
where the last isomorphism follows from the Chinese remainder theorem for the ring $K[x]$.
Since each factor $K[x]/(f_i(x))$ is a finite field of characteristic $2$, we see that $K \otimes_{\mathbf{F}_2} L$ is isomorphic as a ring to a direct product of finite fields of characteristic $2$, as desired.

For (\ref{dims}), simply note that if $\{v_1, \dots, v_k\}$ is a basis for $V$ and $\{w_1, \dots, w_l\}$ is a basis for $W$, then $\{v_i \otimes w_j \, | \, 1 \leq i \leq k, 1 \leq j \leq l\}$ is a basis for $V \otimes_{\mathbf{F}_2}W$.
\end{proof}

We may now classify the finite abelian groups of odd order that appear as the group of units in a commutative ring.
 
\begin{prop} \label{finite-odd} Let $G$ be a finite abelian group of odd order. The group $G$ is isomorphic to the group of units in a commutative ring if and only if $G$ is isomorphic to the group of units in a finite direct product of finite fields of characteristic $2$. In particular, an odd positive integer $k$ is the number of units in a commutative ring if and only if $k$ is of the form $\prod_{i = 1}^t (2^{n_i} -1)$ for some positive integers $n_1, \dots, n_t$.

\end{prop}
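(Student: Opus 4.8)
The statement has two directions together with the displayed numerical corollary. The backward direction is immediate: a finite direct product of finite fields of characteristic $2$ is a commutative ring, so any group realized as its unit group is realized as the unit group of a commutative ring, and that unit group is $\prod_i \mathbf{F}_{2^{n_i}}^\times$, of odd order $\prod_i(2^{n_i}-1)$. So the plan is to concentrate on the forward direction: given a commutative ring $R$ with $R^\times \cong G$ finite of odd order, produce a finite direct product of finite fields of characteristic $2$ with unit group isomorphic to $G$.

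First I would record that $2 = 0$ in $R$: the element $-1 \in R^\times$ satisfies $(-1)^2 = 1$, so its order divides $2$, and since $|R^\times|$ is odd we get $-1 = 1$. Thus $R$ is an $\mathbf{F}_2$-algebra. Since $R$ may well be infinite, the next step is to replace it by a finite subring without changing the unit group. Let $S \subseteq R$ be the $\mathbf{F}_2$-linear span of the finite set $R^\times$. A product of units is a unit and $1$ is a unit, so $S$ is closed under multiplication and hence is a subring with $\dim_{\mathbf{F}_2} S \le |R^\times| < \infty$; in particular $S$ is a finite ring. One checks that $S^\times = R^\times$: every element of $R^\times$ has its inverse in $R^\times \subseteq S$, and conversely any unit of the subring $S$ has its inverse in $S \subseteq R$ and so lies in $R^\times$. (Equivalently, one may apply Lemma~\ref{units-quotient} to the $\mathbf{F}_2$-algebra homomorphism $\mathbf{F}_2[R^\times] \to R$.) Hence I may assume $R$ is a finite commutative $\mathbf{F}_2$-algebra.

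It then remains to show that a finite commutative $\mathbf{F}_2$-algebra with unit group of odd order is a product of finite fields. Let $N$ be the nilradical of $R$. The subset $1 + N$ is a subgroup of $R^\times$ (closure and existence of inverses both follow from nilpotence of elements of $N$), and $n \mapsto 1 + n$ is a bijection $N \to 1 + N$, so $|1+N| = |N|$. Since $N$ is an $\mathbf{F}_2$-vector space, $|N|$ is a power of $2$; as it divides $|R^\times|$, which is odd, we conclude $N = 0$. Thus $R$ is a finite reduced commutative ring; being Artinian, it is a finite product of Artinian local rings, and a reduced Artinian local ring is a field. Therefore $R \cong \prod_{i=1}^t K_i$ with each $K_i$ a finite field, necessarily of characteristic $2$, and $G \cong R^\times \cong \prod_i K_i^\times$. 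Writing $|K_i| = 2^{n_i}$ yields $|G| = \prod_i(2^{n_i}-1)$, which together with the backward direction gives the displayed numerical statement.

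The one point requiring care is the reduction to a finite ring with the same unit group; once $R$ is finite the argument is a short structural computation, the decisive observation being that the "infinitesimal" part $1 + N$ of the unit group has $2$-power order and hence must be trivial when $|R^\times|$ is odd.
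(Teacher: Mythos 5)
Your proof is correct, but the forward direction follows a genuinely different route from the paper's. The paper never touches the nilradical or the Artinian local decomposition: it forms the explicit ring $S = \mathbf{F}_2[x_1,\dots,x_k]/(x_i^{p_i^{\alpha_i}}-1)$ (essentially the group algebra $\mathbf{F}_2[G]$ written via a cyclic decomposition of $G$), maps it onto the units of $R$, and invokes Lemma \ref{units-quotient} to replace $R$ by a quotient of $S$; it then shows $S$ itself is a finite direct product of finite fields of characteristic $2$ by an explicit computation with tensor products of finite fields, separability of $x^{p_i^{\alpha_i}}-1$ (this is where the paper uses that $|G|$ is odd), and the Chinese remainder theorem, finishing with the remark that a quotient of a product of fields is again a product of fields. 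You instead first shrink $R$ to the finite $\mathbf{F}_2$-span of $R^\times$ (or a quotient of the group algebra, as in your parenthetical), and then prove the structural statement that any finite commutative $\mathbf{F}_2$-algebra with an odd number of units is reduced --- the oddness hypothesis enters through the $2$-group $1+N$ inside $R^\times$ rather than through separability --- and hence, by the Artinian (or CRT-over-maximal-ideals) decomposition, a product of finite fields. Both arguments are complete; yours is shorter and more conceptual, and it shows the finite ring you construct is itself a product of fields, while the paper's stays closer to its stated aim of elementary, hands-on computation (polynomial quotients, tensor products, CRT) and produces an explicit realizing ring, namely a quotient of $\mathbf{F}_2[G]$, without appealing to the structure theory of Artinian rings.
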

\begin{proof}
The `if' direction of the second statement follows from the fact that, for rings $A$ and $B$, $(A \times B)^\times \cong A^\times \times B^\times$.

For the converse, since the trivial group is the group of units of $\mathbf{F}_2$, let $G$ be a nontrivial finite abelian group of odd order and let $R$ be a commutative ring with group of units $G$. Since $G$ has odd order, the unit $-1$ in $R$ must have order 1. This implies that $R$ has characteristic 2.

Now let $$G \cong \mathbf{C}_{p_1^{\alpha_1}} \times \cdots \times \mathbf{C}_{p_k^{\alpha_k}}$$ denote a decomposition of $G$ as a direct product of cyclic groups of prime power order (the primes $p_i$ are not necessarily distinct). Let $g_i$ denote a generator of the $i$th factor. Define a ring $S$ by
\[
S = \frac{\mathbf{F}_2[x_1, \dots, x_k]}{(x_1^{p_1^{\alpha_1}} - 1, \dots, x_k^{p_k^{\alpha_k}} - 1)}.
 \]

\noindent Since $R$ is a commutative ring of characteristic 2, there is a natural ring homomorphism $S \longrightarrow R$ sending $x_i$ to $g_i$ for all $i$.  Since the $g_i$'s together generate $G$, this map induces a surjection $S^\times \longrightarrow R^\times,$ and hence by Lemma \ref{units-quotient} there is a quotient of $S$ whose group of units is isomorphic to $G$.

Since any quotient of a finite direct product of fields is again a finite direct product  of fields (of possibly fewer factors), the proof will be complete if we can show that $S$ is isomorphic as a ring to a finite direct product of fields of characteristic $2$. To see this, observe that the map
\[
\mathbf{F}_2[x_1]/(x_1^{p_1^{\alpha_1}} - 1) \times \cdots \times \mathbf{F}_2[x_1]/(x_k^{p_k^{\alpha_k}} - 1) \longrightarrow S
\]
\noindent sending a $k$-tuple to the product of its entries is surjective and $\mathbf{F}_2$-linear in each factor; by the universal property of the tensor product, it induces a surjective ring homomorphism
\begin{equation}\label{tensor-iso}\tag{\dag}
\mathbf{F}_2[x_1]/(x_1^{p_1^{\alpha_1}} - 1) \otimes_{\mathbf{F}_2} \cdots \otimes_{\mathbf{F}_2} \mathbf{F}_2[x_1]/(x_k^{p_k^{\alpha_k}} - 1) \longrightarrow S.
\end{equation}
\noindent The dimension of the source of (\ref{tensor-iso}) as an $\mathbf{F}_2$-vector space is $p_1^{\alpha_1}\cdots p_k^{\alpha_k}$ by Lemma \ref{ff-tensor} (\ref{dims}); this is also the dimension of the target (count monomials in the polynomial ring $S$). Consequently, the map (\ref{tensor-iso}) is an isomorphism of rings. The irreducible factors of each polynomial $x_i^{p_i^{\alpha_i}} - 1$ are distinct since this polynomial has no roots in common with its derivative ($p_i$ is odd).  Therefore by the Chinese remainder theorem, each tensor factor is a finite direct product of finite fields of characteristic 2. Since the tensor product distributes over finite direct products,  we may use  Lemma \ref{ff-tensor} (\ref{fdpff}) to conclude that $S$ is ring isomorphic to a finite direct product of finite fields of characteristic 2.
\end{proof}

For any odd prime $p$, we now characterize the finite abelian $p$-groups that are realizable as the group of units of a commutative ring. Recall that a finite $p$-group is a finite group whose order is a power of $p$. An elementary abelian finite $p$-group is a finite group that is isomorphic to a finite direct product of cyclic groups of order $p$.

\begin{cor}\label{p-groups-odd}
Let $p$ be an odd prime.  A finite abelian $p$-group $G$ is the group of units of a commutative ring if and only if $G$ is an elementary abelian $p$-group and $p$ is a Mersenne prime.
\end{cor}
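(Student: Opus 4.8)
The plan is to derive this from Proposition~\ref{finite-odd} together with a single elementary fact about Mersenne numbers.

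First I would dispatch the forward direction. Suppose $G$ is a nontrivial finite abelian $p$-group (the trivial group being realized by $\mathbf{F}_2$) that occurs as $R^\times$ for some commutative ring $R$. By Proposition~\ref{finite-odd}, $G$ is isomorphic to the group of units of a finite product of finite fields of characteristic $2$, say $\prod_{i=1}^t \mathbf{F}_{2^{n_i}}$, and therefore $G \cong \prod_{i=1}^t \mathbf{C}_{2^{n_i}-1}$. After discarding the trivial factors we may assume every $n_i \geq 2$. Since $G$ is a $p$-group, the order $2^{n_i}-1$ of each cyclic factor must be a power of $p$, say $2^{n_i}-1 = p^{m_i}$ with $m_i \geq 1$.

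The heart of the matter is then to show that an equation $2^n - 1 = p^m$ with $p$ an odd prime and $m \geq 1$ forces $m = 1$. I would argue by contradiction, assuming $m \geq 2$ and writing $2^n = p^m + 1$ (note $n \geq 2$). If $m$ is even, then $p^m$ is an odd square, hence $\equiv 1 \pmod 8$, so $2^n \equiv 2 \pmod 8$, which is impossible for $n \geq 2$. If $m$ is odd, factor $2^n = p^m + 1 = (p+1)\bigl(p^{m-1} - p^{m-2} + \cdots - p + 1\bigr)$; the second factor is a sum of $m$ terms, each odd, and $m$ is odd, so the second factor is odd. Being a divisor of $2^n$, it must equal $1$—but it exceeds $1$ when $p \geq 3$ and $m \geq 3$. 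Hence $m = 1$, so $2^{n_i} - 1 = p$ for every $i$. This shows at once that $p$ is a Mersenne prime and that $G \cong (\mathbf{C}_p)^t$ is elementary abelian.

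For the converse, suppose $p = 2^n - 1$ is a Mersenne prime and $G$ is elementary abelian, say $G \cong (\mathbf{C}_p)^t$. Since $\mathbf{F}_{2^n}^\times$ is cyclic of order $2^n - 1 = p$, the commutative ring $R = \prod_{i=1}^t \mathbf{F}_{2^n}$ has $R^\times \cong (\mathbf{F}_{2^n}^\times)^t \cong (\mathbf{C}_p)^t \cong G$. The only step with genuine content is the number-theoretic claim in the previous paragraph, and that is where I expect the (still elementary) work to lie; the rest is bookkeeping layered on top of Proposition~\ref{finite-odd}.
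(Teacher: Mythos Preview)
Your argument is correct and follows essentially the same route as the paper: invoke Proposition~\ref{finite-odd} to reduce to a product of cyclic groups $\mathbf{C}_{2^{n_i}-1}$, then prove the elementary Diophantine claim that $2^n-1=p^m$ forces $m=1$. The only cosmetic difference is in that last step---you rule out even $m$ via a mod~$8$ computation on odd squares, whereas the paper uses mod~$4$ to force $p\equiv -1\pmod 4$ and $m$ odd---but both then finish with the same factorization $p^m+1=(p+1)(p^{m-1}-\cdots+1)$ and the parity contradiction on the second factor.
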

\begin{proof}
The `if' direction follows from the fact if $p = 2^n - 1$ is a Mersenne prime, then $$(\mathbf{F}_{p+1} \times \cdots \times \mathbf{F}_{p+1})^\times \cong \mathbf{C}_p \times \cdots \times \mathbf{C}_p.$$ For the other direction, let $p$ be an odd prime and let $G$ be a finite abelian $p$-group. If $G$ is the group of units of commutative ring, then  by Proposition \ref{finite-odd}, $G \cong T^\times$ where $T$ is a finite direct product of finite fields of characteristic 2. Consequently, $$G \cong \mathbf{C}_{2^{n_1} - 1} \times \cdots \times \mathbf{C}_{2^{n_t} - 1}.$$ Since each factor must be a $p$-group, for each $i$ we have $2^{n_i} - 1 = p^{z_i}$ for some positive integer $z_i$. 
We claim that $z_i = 1$ for all $i$. This follows from  \cite[2.3]{cl-1}, but since the argument is short we include it here for convenience. 

Assume to the contrary that $z_i > 1$ for some $i$.  Consider the equation $p^{z_i} + 1 = 2^{n_i}$. Since $p > 1$, we have $n_i \ge 2$ and hence $p^{z_i} \equiv -1  \mod 4$. This means $p \equiv -1 \mod 4$ and $z_i$ is odd. Since $z_i > 1$, we have a nontrivial factorization 
\[ 2^{n_i} = p^{z_i} + 1 = (p+1)(p^{z_i-1} - p^{z_i-2} + \cdots -p +1),\]
so $p^{z_i-1} - p^{z_i-2} + \cdots -p +1$ must be even. On the other hand, since $z_i$ and $p$ are both odd, working modulo 2 we obtain 
\[
0 \equiv p^{z_i-1} - p^{z_i-2} + \cdots -p +1 \equiv z_i \equiv 1 \mod 2,
\]
\noindent a contradiction. Hence $z_i = 1$ for all $i$, so $p$ is Mersenne and $G$ an is elementary abelian $p$-group.
\end{proof}

\noindent The above corollary does not hold for the Mersenne prime $p =2$; for example, $\mathbf{C}_4 = \mathbf{F}_5^\times$. As far as we know, Fuchs' problem for finite abelian $2$-groups remains open.


We next provide a simple example demonstrating that every even number is the number of units in a commutative ring.

\begin{prop}\label{finite-even} Every even number is the number of units in a commutative ring. 
\end{prop}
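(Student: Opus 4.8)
The plan is to exhibit, for every positive integer $m$, a commutative ring with exactly $2m$ units; since the even numbers are precisely the integers $2m$ with $m \ge 1$, this suffices. The ring I would use is the square-zero extension
\[
R_m \;=\; \mathbf{Z}[x]/(x^2,\, mx),
\]
whose elements can be written uniquely as $a + bx$ with $a \in \mathbf{Z}$ and $b \in \mathbf{Z}/m\mathbf{Z}$, with multiplication governed by $x^2 = 0$.

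First I would isolate the one structural fact that does all the work: the principal ideal $I = (x)$ of $R_m$ is nilpotent, since $I^2 = (x^2) = 0$, and the quotient is $R_m/I \cong \mathbf{Z}$. For any commutative ring $R$ and any nilpotent ideal $I \subseteq R$, an element of $R$ is a unit exactly when its image in $R/I$ is a unit: if $\bar u \bar v = 1$ in $R/I$ then $uv = 1 - n$ for some nilpotent $n \in I$, and $1 - n$ is a unit, so $u$ is a unit; the converse is clear. Applying this to $R_m$ shows that $R_m^\times$ is precisely the preimage of $\mathbf{Z}^\times = \{1, -1\}$ under the quotient map $R_m \to \mathbf{Z}$.

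Then I would count that preimage. The quotient map $R_m \to \mathbf{Z}$ is surjective with kernel $I = \{\, bx : b \in \mathbf{Z}/m\mathbf{Z} \,\}$, a set of exactly $m$ elements: the relations $x^2 = 0$ and $mx = 0$ leave precisely the distinct multiples $0, x, 2x, \dots, (m-1)x$, and no two of these are identified — nor are $1$ and $-1$ — because $R_m/(x) \cong \mathbf{Z}$ records the coefficient in $\mathbf{Z}$. Hence the preimage of $\{1, -1\}$ has $2m$ elements; explicitly, the units of $R_m$ are the $2m$ elements $\pm 1 + bx$ for $b \in \mathbf{Z}/m\mathbf{Z}$, with $(\pm 1 + bx)^{-1} = \pm 1 \mp bx$.

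There is essentially no obstacle to overcome: the whole argument amounts to choosing the right ring and quoting the elementary observation that units are detected modulo a nilpotent ideal. The only step that deserves a second glance is confirming that the unit count is exactly $2m$ and not larger — that is, that $|I| = m$ and that $1 \neq -1$ in $R_m$ — and both of these follow immediately from $R_m/(x) \cong \mathbf{Z}$.
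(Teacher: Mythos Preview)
Your proof is correct and uses exactly the same ring as the paper, namely $\mathbf{Z}[x]/(x^2, mx)$, arriving at the same count of $2m$ units. The only cosmetic difference is that you phrase the key step via the general principle ``units are detected modulo a nilpotent ideal,'' whereas the paper simply multiplies out $(a+bx)(a'+b'x)$ directly to see that $aa' = 1$ in $\mathbf{Z}$; the content is identical.
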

\begin{proof}
Let $m$ be a positive integer and consider the commutative ring
\[ R_{2m} =  \frac{\mathbf{Z}[x]}{(x^2, mx)}.\]
Every element in this ring can be uniquely represented by an element of the form $a+bx$, where $a$ is an arbitrary integer and  $0 \le b \le m-1$. We will now show that $a+bx$ is a unit in this ring if and only if $a$ is either $1$ or $-1$; this implies the ring has exactly $2m$ units.  (In fact, it can be shown that $R_{2m}^\times \cong \mathbf{C}_2 \times \mathbf{C}_m$.)

If $a+bx$ is a unit in $R_{2m}$, there there exits an element $a'+b'x$ such that $(a+bx)(a'+b'x) = 1$ in $R_{2m}$.  Since $x^2=0$ in $R_{2m}$, we must have $aa'  = 1 $ in $\mathbf{Z}$; i.e., $a$ is  $1$ or $-1$.  Conversely, if $a$ is  $1$ or $-1$, we see that $(a + bx)(a - bx) = 1$ in $R_{2m}$.
\end{proof}

\section{Infinite cardinals}

Propositions \ref{finite-odd} and \ref{finite-even} solve our problem for finite cardinals. For infinite cardinals, we will prove the following proposition.

\begin{prop} \label{infinite}
Every infinite cardinal is the number of units in a commutative ring.
\end{prop}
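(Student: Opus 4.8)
The plan is to observe that the question becomes essentially trivial once we look for a \emph{field}: if $F$ is any infinite field, then $F^\times = F \setminus \{0\}$, and deleting a single element from an infinite set does not change its cardinality, so $|F^\times| = |F|$. Thus it suffices to produce, for every infinite cardinal $\kappa$, a field of cardinality exactly $\kappa$. The field I would use is the field of rational functions $F = \mathbf{F}_2(X)$, where $X$ is a set of indeterminates with $|X| = \kappa$; this has the virtue of handling every infinite $\kappa$ at once.

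The one point that needs care is the verification that $|F| = \kappa$, and this is just bookkeeping resting on the standard fact that $\kappa \cdot \kappa = \kappa$ for infinite $\kappa$. First I would count monomials: a monomial in the variables $X$ is determined by a finitely supported function from $X$ to the nonnegative integers, and there are $\kappa$ such functions (there are $\kappa$ finite subsets of $X$, and for each one only countably many monomials supported on it, so the total is $\kappa \cdot \aleph_0 = \kappa$). A polynomial in $\mathbf{F}_2[X]$ is a finite $\mathbf{F}_2$-linear combination of monomials, so $|\mathbf{F}_2[X]|$ equals the number of finite subsets of a set of size $\kappa$, which is again $\kappa$. Finally $F$ is the field of fractions of $\mathbf{F}_2[X]$, so $\kappa = |\mathbf{F}_2[X]| \le |F| \le |\mathbf{F}_2[X] \times \mathbf{F}_2[X]| = \kappa$, hence $|F| = \kappa$.

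Assembling these observations: given an infinite cardinal $\kappa$, the commutative ring $R = \mathbf{F}_2(X)$ with $|X| = \kappa$ satisfies $|R^\times| = |F| = \kappa$. There is no real obstacle here; the only thing one might reasonably worry about is whether transfinite cardinal arithmetic (and the attendant use of choice) is acceptable in this setting, but it is entirely standard. Together with Propositions~\ref{finite-odd} and~\ref{finite-even}, this completes the classification of the cardinals arising as $|R^\times|$ for a commutative ring $R$: such a cardinal is exactly one that is infinite, or even, or of the form $\prod_{i=1}^t (2^{n_i} - 1)$ for positive integers $n_1, \dots, n_t$.
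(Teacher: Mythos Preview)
Your proposal is correct and follows essentially the same approach as the paper: both take the field of rational functions $\mathbf{F}_2(X)$ with $|X|=\kappa$ and verify $|\mathbf{F}_2(X)^\times|=\kappa$ by bounding the polynomial ring via its finite-variable subrings and then the fraction field by pairs of polynomials. Your presentation differs only cosmetically, first reducing to $|F|=\kappa$ via the observation that deleting one point from an infinite set preserves cardinality, and counting monomials before polynomials, but the substance is identical.
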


\noindent Our proof relies mainly on the Cantor-Bernstein theorem:
\begin{thm}[Cantor-Bernstein]
Let $A$ and $B$ be any two  sets. 
 If there exist injective mappings $f \colon A \longrightarrow B$ and $g \colon B \longrightarrow A$, then there exits a bijective mapping $h \colon A \longrightarrow B$. In other words, if $|A| \le |B|$ and $|B| \le |A|$, then $|A| = |B|$. 
\end{thm}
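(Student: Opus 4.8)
The plan is to prove Cantor--Bernstein by constructing the bijection $h$ explicitly, gluing together $f$ on part of $A$ and the partial inverse $g^{-1}$ on the rest. The idea is to split $A$ into two disjoint pieces $C$ and $A \setminus C$ so that $f$ carries $C$ bijectively onto a subset $f(C) \subseteq B$, while $g^{-1}$ carries $A \setminus C$ bijectively onto the complementary subset $B \setminus f(C)$; assembling these two bijections yields a bijection $A \longrightarrow B$. For this to work I need the piece $A \setminus C$ to lie in the image of $g$ (so that $g^{-1}$ is defined there) and I need the two target pieces $f(C)$ and $B \setminus f(C)$ to partition $B$; the latter is automatic, so the whole construction hinges on choosing $C$ correctly.

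To pin down $C$, I would set $C_0 = A \setminus g(B)$ --- the elements of $A$ with no $g$-preimage --- and then define recursively $C_{n+1} = g(f(C_n))$, taking $C = \bigcup_{n \geq 0} C_n$. The critical lemma is the identity
\[
A \setminus C = g\bigl(B \setminus f(C)\bigr),
\]
which I would verify by a double inclusion: if $a \notin C$ then $a \in g(B)$ (since $a \notin C_0$), say $a = g(b)$, and $b \notin f(C)$ (otherwise $a \in g(f(C_n)) = C_{n+1}$ for some $n$), giving $a \in g(B \setminus f(C))$; conversely, if $a = g(b)$ with $b \notin f(C)$, then injectivity of $g$ prevents $a$ from lying in any $C_{n+1}$, and $a \notin C_0$ since $a \in g(B)$, so $a \notin C$. (Equivalently, $C$ is a fixed point of the monotone set operator $X \mapsto A \setminus g(B \setminus f(X))$, but the explicit union above lets us avoid invoking a general fixed-point theorem.)

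With this identity in hand, I would define $h(a) = f(a)$ for $a \in C$ and $h(a) = g^{-1}(a)$ for $a \in A \setminus C$. The identity guarantees that $g^{-1}$ is defined on $A \setminus C$ and that its image is exactly $B \setminus f(C)$; since $f$ sends $C$ bijectively onto $f(C)$, and the two target blocks $f(C)$ and $B \setminus f(C)$ are complementary in $B$, the map $h$ is injective on each block with disjoint images, and is therefore a bijection. Finally, the reformulation in terms of cardinals is immediate: the hypotheses $|A| \le |B|$ and $|B| \le |A|$ are exactly the existence of the injections $f$ and $g$, and the bijection $h$ witnesses $|A| = |B|$.

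The step I expect to be the main obstacle is establishing the key identity $A \setminus C = g(B \setminus f(C))$ cleanly; everything else is bookkeeping once $C$ is correctly defined. The subtlety is that $C$ must be precisely the set generated from the elements of $A$ outside the image of $g$ by iterating the operation $g \circ f$, and one must resist the temptation to use $f$ everywhere --- doing so fails exactly on the orbits originating in $B \setminus f(A)$, which is why the $g^{-1}$ branch is unavoidable.
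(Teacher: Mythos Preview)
Your argument is the classical Banach--style proof of Cantor--Bernstein and is correct: the inductive construction of $C = \bigcup_{n\ge 0} C_n$ with $C_0 = A \setminus g(B)$ and $C_{n+1} = g(f(C_n))$ does satisfy the identity $A \setminus C = g(B \setminus f(C))$, your double-inclusion verification goes through, and the resulting piecewise map $h$ is a bijection.

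However, there is nothing in the paper to compare your proof against. The paper does not prove Cantor--Bernstein at all; it merely states the theorem as a standard fact from set theory (with a reference to Halmos) and then \emph{applies} it in the proof of Proposition~\ref{infinite} to show that $|\mathbf{F}_2(S)^\times| = |S|$ by exhibiting inequalities in both directions. So your proposal supplies a proof where the paper intentionally omits one, and the question of whether you followed the paper's approach does not arise.
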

\noindent We also use other standard facts from set theory which may be found in \cite{Halmos}. For example, we make frequent use of the fact that whenever $\alpha$ and $\beta$ are infinite cardinals with $\alpha \leq \beta$, then $\alpha\beta \leq \beta$. Recall that $\aleph_0$ denotes the cardinality of the set of natural numbers.

\begin{proof}[Proof of Proposition \ref{infinite}] Let $\lambda$ be an  infinite cardinal and let $S$ be a set whose cardinality is $\lambda$. Consider $\mathbf{F}_2(S)$, the field of rational functions in the elements of $S$. We claim that $|\mathbf{F}_2(S)^\times| = \lambda$. By the Cantor-Bernstein theorem, it suffices to prove that  $|S| \le |\mathbf{F}_2(S)^\times|$ and $|\mathbf{F}_2(S)^\times| \le |S| $. Since $S \subseteq \mathbf{F}_2(S)^\times$, it is clear that $ |S| \le |\mathbf{F}_2(S)^\times|$.

For the reverse inequality, first observe that if $A$ is a finite set, then $|\mathbf{F}_2[A]| = \aleph_0$. This follows by induction on the size of $A$, because $\mathbf{F}_2[x]$ is countable and $R[x]$ is countable whenever $R$ is countable. We now have the following:

\begin{eqnarray*}
|\mathbf{F}_2(S)^\times| & \le & |\mathbf{F}_2(S)| \\
                                 & \le & |\mathbf{F}_2[S] \times \mathbf{F}_2[S]|  \ \ \text{(every rational function is a ratio of  two polynomials)}\\
                                 & = & |\mathbf{F}_2[S]|^2 \\
                                 & = & |\mathbf{F}_2[S]|  \\\
                                                                  & \le & \sum_{A \subset S,\; 1 \le |A| < \aleph_0} |\mathbf{F}_2[A]| \\
                                 & = &  \sum_{A \subset S, \; 1\le  |A| < \aleph_0} \aleph_0 \\                                 & \le & \sum_{i=1}^{\infty} |S|^i \aleph_0 =  \sum_{i=1}^{\infty} |S| \aleph_0   =  |S| \aleph_0^2 =  |S| \aleph_0 = |S|.
\end{eqnarray*} 

\end{proof}

Combining Propositions \ref{finite-odd}, \ref{finite-even}, and \ref{infinite}, we obtain our main result.

\begin{thm} \label{maintheorem}
Let $\lambda$ be a cardinal number. There exists a commutative ring $R$ with $|R^\times| = \lambda$ if and only if $\lambda$ is equal to
\begin{enumerate}
\item an odd number of the form $\prod_{i=1}^t (2^{n_i} -1) $ for some positive integers $n_1, \dots, n_t$,
\item an even number, or
\item an infinite cardinal number.
\end{enumerate} \end{thm}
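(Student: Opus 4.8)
The plan is simply to assemble Propositions \ref{finite-odd}, \ref{finite-even}, and \ref{infinite}, after observing that the three listed conditions partition the cardinal numbers that can possibly occur. First I would note that in any commutative ring $R$ the identity $1$ is a unit, so $|R^\times| \geq 1$; hence $0$ is never realized, and we may restrict attention to $\lambda \geq 1$ (with the standing convention that ``even number'' in (2) means a positive even integer, consistent with Proposition \ref{finite-even}). Every such $\lambda$ then falls into exactly one of three mutually exclusive classes: a positive odd integer, a positive even integer, or an infinite cardinal.

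For the ``if'' direction I would treat the three classes separately. If $\lambda = \prod_{i=1}^t(2^{n_i}-1)$ is a positive odd integer of the stated form, then $\lambda$ is the number of units of $\prod_{i=1}^t \mathbf{F}_{2^{n_i}}$, since $|\mathbf{F}_{2^{n_i}}^\times| = 2^{n_i}-1$ and $(A \times B)^\times \cong A^\times \times B^\times$; this is the easy half of Proposition \ref{finite-odd}. If $\lambda = 2m$ is a positive even integer, it is the number of units of the ring $R_{2m}$ constructed in Proposition \ref{finite-even}. If $\lambda$ is infinite, it is the number of units of $\mathbf{F}_2(S)$ for any set $S$ of cardinality $\lambda$, by Proposition \ref{infinite}.

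For the ``only if'' direction, let $R$ be a commutative ring and put $\lambda = |R^\times| \geq 1$. If $\lambda$ is infinite then it satisfies (3); if $\lambda$ is a positive even integer then it satisfies (2); in either case there is nothing further to check. The sole remaining possibility is that $\lambda$ is a positive odd integer, and then the nontrivial half of Proposition \ref{finite-odd} asserts precisely that $\lambda$ has the form $\prod_{i=1}^t(2^{n_i}-1)$, which is condition (1).

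Since all the substantive content lies in the three preceding propositions, there is no real obstacle here: the proof is pure bookkeeping. The only points deserving a moment's care are the verification that the three cases are exhaustive and mutually exclusive over cardinals $\geq 1$, and the harmless remark that $\lambda = 0$ is excluded because $1 \in R^\times$ for every ring $R$.
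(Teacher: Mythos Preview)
Your proposal is correct and matches the paper's approach exactly: the paper's entire proof is the single sentence ``Combining Propositions \ref{finite-odd}, \ref{finite-even}, and \ref{infinite}, we obtain our main result.'' Your additional remarks about $\lambda = 0$ and the exhaustiveness of the three cases are reasonable hygiene but go beyond what the paper bothers to say.
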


\begin{acknowledgment}{Acknowledgments.}
We would like to thank George Seelinger for simplifying our presentation of a ring with an even number of units. We also would like to thank the anonymous referees for their comments. 
\end{acknowledgment}

\begin{affil}
Department of Mathematics, Illinois State University, Normal, IL 61790, USA\\
schebol@ilstu.edu
\end{affil}
\begin{affil}
Department of Mathematics, Gettysburg College, Gettysburg, PA 17325, USA\\
klockrid@gettysburg.edu
\end{affil}
\vfill\eject


\begin{thebibliography}{1}

 \bibitem{cl-2} S.~K. Chebolu, K.~Lockridge,
 \newblock Fuchs' problem for indecomposable abelian groups, 
 \newblock  {\em J. Algebra} \textbf{438} (2015) 325--336. 
 

 \bibitem{cl-3} \textemdash\textemdash\textemdash,
 \newblock Fuchs' problem for dihedral groups, 
 \newblock  {\em J. Pure Appl. Algebra} \textbf{221} no. 2 (2017) 971--982.
 
 

\bibitem{cl-1} \textemdash\textemdash\textemdash,
\newblock Fields with indecomposable multiplicative groups,
\newblock  \emph{Expo. Math.} \textbf{34} (2016) 237--242. 


\bibitem{CT-1} C. Davis, T. Occhipinti,
\newblock Which finite simple groups are unit groups? 
\newblock \emph{J. Pure Appl. Algebra} \textbf{218} no. 4 (2014) 743--744. 

 \bibitem{CT-2} \textemdash\textemdash\textemdash,
 \newblock Which alternating and symmetric groups are unit groups? 
 \newblock \emph{J. Algebra Appl.} \textbf{13} no. 3 (2014). 
 
  \bibitem{ditor} S. Z. Ditor, 
  \newblock On the group of units of a ring,
  \newblock {\em Amer. Math. Monthly} \textbf{78} (1971) 522--523.
  
  
  \bibitem{Eldridge}  K. E. Eldridge, 
  \newblock On ring structures determined by groups, 
  \newblock {\em Proc. Amer. Math. Soc.} \textbf{23} (1969)  472--477.

\bibitem{Fuchs}
L.~Fuchs,
\newblock {\em Abelian groups}.
\newblock International Series of Monographs on Pure and Applied Mathematics,
  Pergamon Press, New York-Oxford-London-Paris, 1960.


\bibitem{Halmos}
 P. R. Halmos, 
 \newblock {\em Naive set theory. }
\newblock  The University Series in Undergraduate Mathematics, D. Van Nostrand Co., Princeton, NJ-Toronto-London-New York, 1960.

\bibitem{PS}
K.~R. Pearson, J.~E. Schneider,
\newblock Rings with a cyclic group of units,
\newblock {\em J. Algebra} \textbf{16} (1970) 243--251.


\end{thebibliography}
\end{document}